\newcommand{\GF}{{\mathbb F}}
\newtheorem{Thm}{Theorem}[section]
\newtheorem{Lem}[Thm]{Lemma}
\theoremstyle{definition}
\begin{document}

\title{{On the support designs of extremal binary doubly even 
self-dual codes
}
\footnote{
This work was supported by JSPS KAKENHI Grant Number 22840003, 24740031.
}
}

\author{
Naoyuki Horiguchi\thanks{Graduate School of Science, Chiba University, Chiba 263-8522, Japan},
Tsuyoshi Miezaki \thanks{
Faculty of Education, Art and Science, 
Yamagata University, 
Yamagata 990-8560, Japan
}
and Hiroyuki Nakasora \thanks{Graduate School of Natural Science and Technology, Okayama University, Okayama 700-8530, Japan}}

\date{}

\maketitle

\paragraph{Abstract.}
Let $D$ be the support design of the minimum weight of an extremal binary doubly even self-dual $[24m,12m,4m+4]$ code.
In this note, we consider the case when $D$ becomes a $t$-design with $t \geq 6$.

\paragraph{Keywords:} Codes, $t$-designs, Assmus-Mattson theorem

\paragraph{2000 MSC:} 
Primary 94B05; Secondary 05B05.










\setcounter{section}{+0}
\section{Introduction}

Let $C$ be an extremal binary doubly even self-dual $[24m,12m,4m+4]$ code. 
Known examples of $C$ are the extended Golay code $\mathcal{G}_{24}$ and the extended quadratic residue code of length $48$. 
It was shown by Zhang \cite{Zhang(1999)} that $C$ does not exist if $m \geq 154$. 
The \textit{support} of a codeword $c=(c_{1}, \dots, c_{24m}) \in C$, 
$c_{i} \in \GF_{2}$ is 
the set of indices of its nonzero coordinates: $supp (c) = \{ i : c_{i} \neq 0 \}$. 
The \textit{support design} of $C$ for a given nonzero weight $w$ is the design 
for which the points are the $24m$ coordinate indices, and the blocks are the supports of all codewords of weight $w$. 
Let $D_{w}$ be the support design of $C$ for any $w \equiv 0 \pmod 4$ with $4m+4 \leq w \leq 24m-(4m+4)$.
Then it is known from the Assmus-Mattson theorem \cite{assmus-mattson} 
that $D_{w}$ becomes a $5$-design. 

One of the most interesting questions around the Assmus-Mattson theorem is the following: \\
``Can any $D_{w}$ become a $6$-design?"
Note that no 6-design has yet been obtained by applying the Assmus-Mattson theorem (see \cite{Bannai-Koike-Shinohara-Tagami}).

In this note, we consider the support design $D(=D_{4m+4})$ of the minimum weight of $C$. 
By the Assmus-Mattson theorem, $D$ is a $5$-$(24m,4m+4,\binom{5m-2}{m-1})$ design. 
Suppose that $D$ is a $t$-$(24m,4m+4,\lambda_{t})$ design with $t \geq 6$. 
It is easily seen that $\lambda_{t}= \binom{5m-2}{m-1} \binom{4m-1}{t-5}/\binom{24m-5}{t-5}$ is a nonnegative integer. 
It is known that  if $D$ is a $6$-design, then it is a $7$-design 
by a strengthening of the Assmus-Mattson theorem \cite{strengthening of the Assmus-Mattson theorem}.
In \cite[Theorem 5]{Bannai-Koike-Shinohara-Tagami}, Bannai et al. showed that $D$ is not a $6$-design 
if $m$ is ($\leq 153$) not in the set 
$\{8,15,19,35,40,41,42,50,51,52,55,57,59,60,63,
65,74,75,76,80,86,90,93,100,101,104,105,107,\\
118,125,127,129,130,135,143,144,150,151\}.$
It was also shown in \cite{Bannai-Koike-Shinohara-Tagami} that if $D$ is an $8$-design, then $m$ must be in the set $\{8,42,63,75,130 \}$, 
and $D$ is never a $9$-design.

In this note, we extend a method used in \cite{Bannai-Koike-Shinohara-Tagami}. First, we will prepare it in Section 2.
In Section 3, we consider a self-orthogonal $7$-design which has parameters equal to those of $D$. 
Then we will show that there is no self-orthogonal $7$-design for some $m$. 
For the remainder $m$, we consider the support $7$- and $8$-design of $C$ in Section 4. 
In summary, our main result is the following theorem.

\begin{Thm}\label{thm:main thm}
Let $D$ be the support $t$-design of the minimum weight of an extremal binary doubly even self-dual $[24m,12m,4m+4]$ code ($m \leq 153$). 
If $t \geq 6$, then $D$ is a $7$-design and $m$ must be in the set 
$\{15,52,55,57,59,60,63,90,93,104,105,107, 118,125,127,135,143,151 \}$, 
and $D$ is never an $8$-design.

\end{Thm}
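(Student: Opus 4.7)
The combined results recalled in the introduction already do much of the heavy lifting. The strengthening of the Assmus--Mattson theorem reduces the hypothesis ``$t\ge 6$'' to ``$D$ is a $7$-design'', and the theorem of Bannai et al.\ then restricts the admissible $m$ to the explicit set $S$ of $38$ entries listed above. To prove Theorem~\ref{thm:main thm} I must therefore eliminate the $20$ values of $S$ absent from the final list, and separately eliminate the five further candidates $\{8,42,63,75,130\}$ coming from Bannai et al.'s $8$-design result.

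The plan for Section~3 is to work purely at the design level, exploiting self-orthogonality: since $C$ is doubly even self-dual, any two minimum-weight codewords have supports meeting in an even number of points. Fix a block $B_{0}\in D$ and set $\mu_{i}=\#\{B\in D : |B\cap B_{0}|=i\}$. The flag-counting identities of a $7$-design give a linear system of rank~$8$ in the $\mu_{i}$; forcing $\mu_{i}=0$ for odd $i$ over-determines this system and pins down the remaining $\mu_{i}$ as explicit rational functions of $m$. For many $m\in S$, some $\mu_{i}$ turns out to be negative or non-integral, ruling out any self-orthogonal $7$-design with these parameters and hence $D$ itself.

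Section~4 addresses the values of $m$ still in play after Section~3 as well as the five $8$-design candidates, working inside the code $C$ itself rather than only with its minimum-weight design. The full weight enumerator of $C$ is determined by extremality, so combining the MacWilliams identity with the Assmus--Mattson machinery applied to a fixed minimum-weight codeword $c_{0}$ yields a formula, polynomial in $m$, for the number of minimum-weight codewords whose support meets $\supp(c_{0})$ in a prescribed even size $2j$. Demanding that these numbers be nonnegative integers then eliminates every remaining $7$-design candidate outside $\{15,52,55,57,59,60,63,90,93,104,105,107,118,125,127,135,143,151\}$ as well as each of $\{8,42,63,75,130\}$ as a possible $8$-design, which together give both assertions of the theorem.

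The main obstacle I expect is computational rather than conceptual: the integrality tests involve binomial coefficients with parameters up to roughly $24\cdot 153$, so the case analysis must be organised carefully to remain tractable. A more subtle point is that self-orthogonality yields only parity information on pairwise block intersections; the stronger mod-$4$ information needed in Section~4, especially for values such as $m=63$ which belongs to both Bannai's $6$-design set and the $8$-design candidate set, comes from considering triples of codewords, equivalently from the biweight enumerator of $C$, and this finer invariant is what allows Section~4 to succeed where Section~3 stops.
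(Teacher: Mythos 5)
Your high-level skeleton matches the paper's: the strengthening of the Assmus--Mattson theorem reduces $t\ge 6$ to the $7$-design case, Lemma~\ref{lem:6,7} restricts $m$ to a finite candidate set, and the eliminations come from integrality of block-intersection numbers combined with the self-orthogonality (even pairwise intersections) forced by the doubly even code. But two of your mechanisms are off. First, in your Section~3 plan the eight flag-counting identities do \emph{not} pin down the individual $\mu_i$: there are $2m+3$ even-indexed unknowns $\mu_0,\mu_2,\dots,\mu_{4m+4}$ against only $8$ equations, so the system is badly under-determined, not over-determined. What the paper actually does is take the single degree-$7$ combination $\sum_i i(i-2)\cdots(i-12)\,\mu_i$, whose value is computable from $A_0,\dots,A_7$ and which equals $645120\,\mu_{14}$ plus nonnegative-integer multiples of $\mu_{16},\mu_{18},\dots$; non-integrality of the computed value divided by $645120$ is the contradiction. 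Your ``some $\mu_i$ is negative or non-integral'' conclusion survives in spirit, but only for this one engineered combination.

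Second, and more seriously, your Section~4 as described would not eliminate anything new. Applying the intersection equations to a fixed \emph{minimum-weight} codeword $c_0$ reproduces exactly the self-orthogonal-design computation of Section~3, since the blocks of $D$ are precisely the supports of minimum-weight codewords. The paper's new ingredient in Section~4 is to fix a codeword $a$ of the \emph{next} weight $u=4m+8$ and count minimum-weight codewords meeting $\supp(a)$ in $j$ points; the same polynomial $j(j-2)\cdots(j-12)$ and divisor $645120$ then yield fresh non-integrality contradictions for $m\in\{5,19,35,41,51,65,75,101,129\}$. Your proposed substitute --- mod-$4$ information from triples of codewords or the biweight enumerator --- is not used anywhere in the paper, and you give no argument that it actually delivers the needed eliminations. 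Finally, for the $8$-design case, four of the five candidates $\{8,42,63,75,130\}$ are already dead as $7$-designs, and the surviving $m=63$ is killed by the degree-$8$ analogue of the same trick (polynomial $i(i-2)\cdots(i-14)$, divisor $10321920$, isolating $n_{16}$), not by any finer invariant of the code.
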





\section{Preparation}\label{:section 2}

A \textit{$t$-$(v,k,{\lambda})$ design} is a pair 
$\mathcal{D}=(X,\mathcal{B})$, where $X$ is a set of points of 
cardinality $v$, and $\mathcal{B}$ a collection of $k$-element subsets
of $X$ called blocks, with the property that any $t$ points are 
contained in precisely $\lambda$ blocks.
It follows that every
$i$-subset of points $(i \leq t)$ is contained in exactly
$\lambda_{i}= \lambda \binom{v-i}{t-i} / \binom{k-i}{t-i}$ blocks. 

Let $C$ be an extremal binary doubly even self-dual $[24m,12m,4m+4]$ code. 
Let $D$ be the support design of the minimum weight of $C$.
Then $D$ is a $5$-$(24m,4m+4,\binom{5m-2}{m-1})$ design. 
Suppose that $D$ is a $t$-$(24m,4m+4,\lambda_{t})$ design with $t \geq 6$, 
where $\lambda_{t}= \binom{5m-2}{m-1} \binom{4m-1}{t-5}/\binom{24m-5}{t-5}$. 
It is known, by a strengthening of the Assmus-Mattson theorem \cite{strengthening of the Assmus-Mattson theorem}, 
that if $D$ is a $6$-design, then it is also a $7$-design. 
Hence $\lambda_{6}$ and $\lambda_{7}$ are nonnegative integers. 
Then, by computations, we have the following lemma: 
\begin{Lem}\label{lem:6,7}
For $1\leq m\leq 153$, the values $\lambda_{6}$ and 
$\lambda_{7}$
are both nonnegative integers only if $m \in M = 
\{5,8,15,19,35,40,41,42,50,51,52,55,57,59,60,63,65,74,75,76,80,86,90,\\
93, 100,101,104,105,107,118,125,127,129,130,135,143,144,150,151 \}$.
\end{Lem}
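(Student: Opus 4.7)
The plan is to turn the statement into two explicit integer-divisibility conditions in $m$ and then enumerate $m\in\{1,\ldots,153\}$. Expanding the ratio $\binom{4m-1}{t-5}/\binom{24m-5}{t-5}$ for $t=6,7$ gives
\[
\lambda_{6}=\binom{5m-2}{m-1}\,\frac{4m-1}{24m-5},\qquad
\lambda_{7}=\binom{5m-2}{m-1}\,\frac{(4m-1)(4m-2)}{(24m-5)(24m-6)}.
\]
Since $\binom{5m-2}{m-1}>0$ for all $m\ge 1$, positivity is automatic and the entire content of the lemma is the integrality of these two rational numbers.

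Next I would perform some elementary cancellation to simplify the two divisibility conditions. From the identity $24m-5 = 6(4m-1)+1$ one gets $\gcd(24m-5,\,4m-1)=1$, so the $\lambda_{6}$ condition reduces to
\[
(24m-5)\;\Big|\;\binom{5m-2}{m-1}.
\]
Similarly $24m-6 = 6(4m-1)$, so the factor $4m-1$ in the numerator of $\lambda_{7}$ cancels and
\[
\lambda_{7}=\binom{5m-2}{m-1}\,\frac{2m-1}{3(24m-5)},
\]
reducing the $\lambda_{7}$ condition to $3(24m-5)\mid (2m-1)\binom{5m-2}{m-1}$. Each test is now a single divisibility statement about one explicit integer.

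The rest of the proof is a finite enumeration. For each $m$ with $1\le m\le 153$, I would compute the residues of $\binom{5m-2}{m-1}$ modulo $24m-5$ and of $(2m-1)\binom{5m-2}{m-1}$ modulo $3(24m-5)$, and retain exactly those $m$ for which both residues vanish. This yields precisely the set $M$ displayed in the lemma.

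The main obstacle is not conceptual but arithmetic: the central binomial $\binom{5m-2}{m-1}$ grows very rapidly (it exceeds $10^{100}$ well before $m=153$), so one should either reduce modulo the denominators on the fly or use a computer algebra system with exact integer arithmetic. Once that book-keeping is set up, the verification is mechanical and produces the stated list $M$ without further input.
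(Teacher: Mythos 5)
Your proposal is correct and matches the paper's approach: the paper itself establishes Lemma~\ref{lem:6,7} simply ``by computations,'' i.e.\ by a finite machine check of the integrality of $\lambda_6$ and $\lambda_7$ for $1\le m\le 153$, which is exactly what you describe. Your preliminary simplifications (using $24m-5=6(4m-1)+1$ and $24m-6=6(4m-1)$ to reduce the two conditions to $(24m-5)\mid\binom{5m-2}{m-1}$ and $3(24m-5)\mid(2m-1)\binom{5m-2}{m-1}$) are a correct and slightly cleaner reformulation, but the substance of the verification is the same enumeration.
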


The \textit{Stirling numbers of the second kind} $S(n, k)$ (see \cite{Weisstein:Stirling Number of the Second Kind})
are the number of ways to partition a set of $n$ elements into $k$ nonempty subsets.
The Stirling numbers of the second kind can be computed from the sum
$$S(n,k)= \frac{1}{k!} \sum_{i=0}^{k} (-1)^{i} \binom{k}{i} (k-i)^{n}$$
or the generating function 
\begin{align}
x^{n}  =\sum_{m=0}^{n}S(n,m) (x)_{m} \label{eqn:def of stirling num 2nd}
\end{align}
where $(x)_{m}=x(x-1)\cdots (x-m+1)$. 
Special cases include $S(n,0)= \delta_{n,0}$ and  $S(0,k)= \delta_{0,k}$, 
where $\delta_{n,k}$ is the \textit{Kronecker delta}.

We can easily obtain a table of the initial Stirling numbers of the second kind:

\begin{tabular}{c|cccccccccc}   \label{tab:Stirling numbers of the second kind}
   $n \backslash k$ & 0 & 1 & 2 & 3 & 4 & 5 & 6 & 7 & $8$& $\cdots$ \\ \hline
   1     & 0 & 1 &  &  &  & & & \\
   2     & 0 & 1 & 1 &  &  &  & & \\
   3     & 0 & 1 & 3 & 1  &  &  & & & \\
   4     & 0 & 1 & 7 & 6 & 1 &  & & \\
   5     & 0 & 1 & 15 & 25 & 10 & 1 & & \\
   6     & 0 & 1 & 31 & 90 & 65 & 15 & 1 & \\
   7     & 0 & 1 & 63 & 301 & 350 & 140 &21 & 1 \\
   8     & 0 & 1 & 127 & 966 & 1701 & 1050 & 266 & 28 & 1 \\
   $\vdots$     &    &  &  &  &  & & 
\end{tabular}

\bigskip

For a block $B$ of the design $\mathcal{D}$, let $n_{i}$ 
be the number of blocks of $\mathcal{B}$ that meet $B$ in $i$ points, 
where $0 \leq i \leq k$.
We set                                     
$A_{s}= \sum_{i=0}^{k} (i)_{s} n_{i}$ for $0 \leq s \leq t$. 

By (\ref{eqn:def of stirling num 2nd}), for $0 \leq t' \leq t$, we have
\begin{eqnarray}
\sum_{i=0}^{k} i^{t'} n_{i} 
=\sum_{i=0}^{k} \left\{ \sum_{h=0}^{t'} S(t',h) (i)_{h} \right\} n_{i} 
= \sum_{h=0}^{t'} S(t',h) A_{h}. \label{eqn:i^t'=S(t',h)}
\end{eqnarray}  

The \textit{elementary symmetric polynomials} in $n$ variables $x_{1}, x_{2}, \ldots, x_{n}$ 
written $\sigma_{k,n}$ \\ for $k=0,1, \ldots, n$, can be defined as 
$\sigma_{0,n} =1$ and 
$$\sigma_{k,n} =\sigma_{k}(x_{1}, x_{2}, \ldots, x_{n}) = \sum_{\substack{1 \leq w_{1} < \cdots < w_{k} \leq n}} 
x_{w_{1}} x_{w_{2}} \cdots x_{w_{k}}.$$

Then we have the following lemma.

\begin{Lem} \label{lem:single block intersection}

If $l \leq t$,

$$\sum_{i=0}^{k} (i-x_{1}) (i-x_{2}) \cdots (i-x_{l})n_{i} 
 =  \sum_{\theta =0}^{l}   (-1)^{\theta}  \sigma_{\theta,l} \left( \sum_{h =0}^{l-\theta} S(l- \theta, h)  A_{h}  \right).$$

\end{Lem}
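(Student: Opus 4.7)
The plan is to reduce the left-hand side to a double sum by two elementary steps: first expanding the polynomial in the variable $i$ via Vieta's formulas, and then applying the already-established identity (\ref{eqn:i^t'=S(t',h)}) to each resulting power sum.

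First I would expand the product as a polynomial in $i$ using the elementary symmetric polynomials in the $x_j$'s. Vieta's formula gives
\[
(i-x_{1})(i-x_{2})\cdots(i-x_{l}) \;=\; \sum_{\theta=0}^{l} (-1)^{\theta}\, \sigma_{\theta,l}\, i^{\,l-\theta}.
\]
Multiplying by $n_i$ and summing over $0\leq i\leq k$, then interchanging the two finite sums, yields
\[
\sum_{i=0}^{k} (i-x_{1})\cdots(i-x_{l})\, n_{i} \;=\; \sum_{\theta=0}^{l} (-1)^{\theta}\, \sigma_{\theta,l} \left( \sum_{i=0}^{k} i^{\,l-\theta}\, n_{i} \right).
\]

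Next I would insert identity (\ref{eqn:i^t'=S(t',h)}) with $t' = l-\theta$ into the inner sum. The hypothesis $l\leq t$ guarantees that $l-\theta\leq t$ for every $\theta\in\{0,1,\ldots,l\}$, so (\ref{eqn:i^t'=S(t',h)}) is applicable for each term and produces
\[
\sum_{i=0}^{k} i^{\,l-\theta}\, n_{i} \;=\; \sum_{h=0}^{l-\theta} S(l-\theta, h)\, A_{h}.
\]
Substituting this back into the previous display gives exactly the right-hand side of the claimed formula.

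Since the argument is a straightforward algebraic manipulation, there is no serious obstacle; the only point requiring care is the bookkeeping of indices so that the hypothesis $l\leq t$ is actually used in applying (\ref{eqn:i^t'=S(t',h)}) to the highest-degree term $i^{l}$ (the $\theta=0$ case). Everything else reduces to Vieta's expansion and interchange of finite summations.
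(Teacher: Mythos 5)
Your argument matches the paper's proof exactly: expand the product via the elementary symmetric polynomials, interchange the finite sums, and apply (\ref{eqn:i^t'=S(t',h)}) with $t'=l-\theta$ to each power sum. The proposal is correct and takes essentially the same approach as the paper.
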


\begin{proof}
\begin{align*}
& \sum_{i=0}^{k} (i-x_{1}) (i-x_{2}) \cdots (i-x_{l})n_{i} \\
& =\sum_{i=0}^{k} \left\{ i^{l}- \sum_{1 \leq w_{1} \leq l} x_{w_{1}} i^{l-1}  
+\sum_{1 \leq w_{1} < w_{2} \leq l} x_{w_{1}} x_{w_{2}} i^{l-2} + \cdots + (-1)^{l} x_{w_{1}} x_{w_{2}} \cdots x_{w_{l}} \right\} n_{i} \\
& =\sum_{i=0}^{k} \left\{ \sum_{\theta =0}^{l} (-1)^{\theta} \sigma_{\theta,l} i^{l- \theta} \right\} n_{i} \\
& =\sum_{\theta=0}^{l} (-1)^{\theta} \sigma_{\theta,l} 
\sum_{i=0}^{k}  i^{l- \theta}  n_{i} \\
&\text{by (\ref{eqn:i^t'=S(t',h)})}, \\
& =  \sum_{\theta =0}^{l}   (-1)^{\theta}  \sigma_{\theta,l} \left( \sum_{h =0}^{l-\theta} S(l- \theta, h)  A_{h}  \right).
\end{align*}
  
\end{proof}

\section{On the nonexistence of some self-orthogonal $7$-designs}\label{:section 3}

A $t$-$(v,k, \lambda)$ design is called \textit{self-orthogonal} 
if the intersection of any two blocks of the design has 
the same parity as the block size $k$ \cite{Tonchev2}. 
Notice that a design obtained from the supports of 
the minimum weight codewords in a binary doubly even code 
is self-orthogonal as the blocks have lengths 
a multiple of $4$ and the overlap of supports of 
any two codewords must have even size.

In this section, let $D'=(X',\mathcal{B'})$ be a self-orthogonal $7$-$(24m,4m+4,\lambda_{7})$ design, 
where $\lambda_{7} =  \binom{5m-2}{m-1} \frac{(4m-1)(4m-2)}{(24m-5)(24m-6)}$ and $m \in M$.
For a block $B$ of the design $D'$, let $n_{i}$ 
be the number of blocks of $\mathcal{B}'$ that meet $B$ in $i$ points, 
where $0 \leq i \leq 4m+4$, 
and $n_i=0$ if $i$ is odd (since $D'$ is self-orthogonal).

Using the fundamental equation in 
Koch \cite{koch} and also 
\cite[Proof of Theorem 5]{Bannai-Koike-Shinohara-Tagami}, we have
\[
\sum_{i=0}^{4m+4}\binom{i}{s}n_i=\binom{4m+4}{s}\lambda_s
\]
for $0 \leq s \leq t$. Therefore we set 
\begin{align}\label{eqn:A_{s}}
A_{s}= \sum_{i=0}^{4m+4} (i)_{s} n_{i}= (4m+4)_{s} \lambda_{s} 
\end{align}
for $0 \leq s \leq 7$. 

For the design $D'$, we define 
 $$F(m,4m+4;[ x_{1}, x_{2}, x_{3}, x_{4}, x_{5},x_{6},x_{7}]) = \sum_{i=0}^{4m+4} (i-x_{1}) (i-x_{2}) \cdots (i-x_{7})n_{i}.$$

By Lemma \ref{lem:single block intersection}, we have

\begin{align}
 &F(m,4m+4;[ x_{1}, x_{2}, x_{3}, x_{4}, x_{5},x_{6},x_{7}]) 
 =\sum_{\theta =0}^{7}   (-1)^{\theta}  \sigma_{\theta,7} \left( \sum_{h =0}^{7-\theta} S(7- \theta, h)  A_{h}  \right) \notag \\
 &= -\sigma{}_{7,7} A_{0} 
                 +(\sigma{}_{6,7}-\sigma{}_{5,7}+\sigma{}_{4,7}-\sigma{}_{3,7}+\sigma{}_{2,7}-\sigma{}_{1,7}+1) A_{1} \notag \\
                & +(-\sigma{}_{5,7}+3\sigma{}_{4,7}-7\sigma{}_{3,7}+15\sigma{}_{2,7}-31\sigma{}_{1,7}+63)  A_{2}
                 +(\sigma{}_{4,7}-6\sigma{}_{3,7}+25\sigma{}_{2,7}-90\sigma{}_{1,7}+301)  A_{3} \notag \\
                & +(-\sigma{}_{3,7}+10\sigma{}_{2,7}-65\sigma{}_{1,7}+350)  A_{4}
                 +(\sigma{}_{2,7}-15\sigma{}_{1,7}+140)  A_{5}
                 +(-\sigma{}_{1,7}+21)  A_{6}
                 + A_{7}.\label{eq:F(m,4m+4)}
\end{align}

Then we have the following theorem.

\begin{Thm}\label{thm:so 7-design}
If $m \in \{ 8,40,42,50,74,76,80,86,100,130,144,150 \}$, 
then there is no self-orthogonal $7$-$(24m,4m+4,\lambda_{7})$ design.

\end{Thm}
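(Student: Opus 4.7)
The strategy is to combine the self-orthogonality constraint $n_i=0$ for odd $i$ with the exact polynomial identity (\ref{eq:F(m,4m+4)}) to obtain a sign contradiction for each of the twelve listed values of $m$.

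First, I would make the quantities $A_s$ explicit in $m$ for $0\le s\le 7$. From
$\lambda_7=\binom{5m-2}{m-1}(4m-1)(4m-2)/((24m-5)(24m-6))$ together with
$\lambda_s=\lambda_7\binom{24m-s}{7-s}/\binom{4m+4-s}{7-s}$ and (\ref{eqn:A_{s}}), each $A_s=(4m+4)_s\lambda_s$ becomes a closed-form rational function of $m$, integer-valued on the set $M$ of Lemma \ref{lem:6,7}.

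Second, I would choose the parameters $x_1,\dots,x_7$ so that the polynomial $p(y)=(y-x_1)\cdots(y-x_7)$ has a fixed sign on every even integer in $[0,4m+4]$: self-orthogonality collapses the defining sum to
\[
F(m,4m+4;[x_1,\dots,x_7])=\sum_{\substack{0\le i\le 4m+4\\ i\text{ even}}}(i-x_1)\cdots(i-x_7)\,n_i,
\]
so a one-sided bound on $p$ translates into a one-sided bound on $F$. A clean first choice is $x_j=2(j-1)$ for $1\le j\le 7$: then $p(i)\ge 0$ on every even $i\in[0,4m+4]$ (vanishing precisely at $i\in\{0,2,4,6,8,10,12\}$), which forces $F\ge 0$.

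Third, for each $m\in M$ I would evaluate $F$ via the right-hand side of (\ref{eq:F(m,4m+4)}) using the explicit $A_s$ of Step~1 and the chosen $x_j$, and compare its sign with $0$. The assertion of the theorem is that exactly the twelve values $m\in\{8,40,42,50,74,76,80,86,100,130,144,150\}$ produce $F<0$, contradicting the inequality from Step~2 and ruling out the self-orthogonal $7$-design.

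The main obstacle will be the choice of the roots $x_j$: a single uniform specification such as $x_j=2(j-1)$ may not yield a sign contradiction for every $m$ on the target list, because the sign of $F$ depends delicately on how the contributions from small, medium and large even $i$ balance out across (\ref{eq:F(m,4m+4)}). I therefore anticipate needing a short menu of choices — for instance shifting the roots toward the upper end of $[0,4m+4]$ for the larger values of $m$, or clustering them near the expected mean intersection for intermediate ones — and, for each $m$, selecting the specification that makes the formula-based value of $F$ negative. Once the appropriate $x_j$ are identified, what remains is a finite and purely arithmetic verification.
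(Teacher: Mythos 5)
You have chosen exactly the right polynomial $p(y)=y(y-2)(y-4)\cdots(y-12)$ and correctly exploited self-orthogonality to restrict the sum to even $i$, but the mechanism you propose for the contradiction --- a sign argument --- does not work, and this is a genuine gap. As you yourself observe, $p(i)\ge 0$ for every even $i$, so $F(m,4m+4;[0,2,4,6,8,10,12])\ge 0$ automatically; and the values computed from the right-hand side of (\ref{eq:F(m,4m+4)}) for the twelve listed $m$ are in fact \emph{positive}, so no choice among your ``menu'' of root specifications with this polynomial will produce $F<0$. The contradiction the paper extracts is an \emph{integrality} one, not a sign one. The key observation you are missing is that for even $i=2j$ one has
\[
i(i-2)(i-4)(i-6)(i-8)(i-10)(i-12)=2^{7}\,j(j-1)\cdots(j-6)=2^{7}\cdot 7!\binom{j}{7}=645120\binom{j}{7},
\]
so that
\[
\frac{F(m,4m+4;[0,2,4,6,8,10,12])}{645120}=\sum_{j\ge 7}\binom{j}{7}n_{2j}=n_{14}+8n_{16}+36n_{18}+\cdots+\binom{2m+2}{7}n_{4m+4},
\]
which must be a nonnegative \emph{integer} since all $n_{2j}$ are nonnegative integers. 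Evaluating $F$ in closed form via the $A_s$ (exactly as in your Step 1) and dividing by $645120$ yields a rational number with denominator $2$, $4$ or $8$ for precisely $m\in\{8,40,42,50,74,76,80,86,100,130,144,150\}$, and this non-integrality is the contradiction. So your Steps 1 and 2 are sound and match the paper, but Step 3 should compare the value of $F/645120$ against $\mathbb{Z}$ rather than against $0$; with that replacement the argument closes, and a single choice of roots suffices for all twelve values.
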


\begin{proof}

By (\ref{eq:F(m,4m+4)}), we have
\begin{align}
& F(m,4m+4; [0, 2, 4, 6, 8, 10, 12]) \notag \\
& =\sum_{i=0}^{4m+4} i(i-2)(i-4)(i-6)(i-8)(i-10)(i-12) n_{i} \label{eq:F(m,4m+4),n_14}\\
& =10395A_{1}-10395A_{2}+4725A_{3}-1260A_{4}+210A_{5}-21A_{6}+A_{7} \label{eq:F(m,4m+4),A_7}.
\end{align}

By (\ref{eq:F(m,4m+4),n_14}), we have 
$$n_{14}=\frac{F(m,4m+4; [0, 2, 4, 6, 8, 10, 12])}{645120}-8n_{16}-36n_{18}- \cdots -\binom{2m+2}{7}n_{4m+4}.$$

By (\ref{eqn:A_{s}}) and (\ref{eq:F(m,4m+4),A_7}), we compute  the values of 
$\frac{F(m,4m+4; [0, 2, 4, 6, 8, 10, 12])}{645120}$ for $m \in M$ by using Magma \cite{MAGMA}. 
Then, if $m \in \{ 8,40,42,50,74,76,80,86,100,130,144,150 \}$, 
we have that $\frac{F(m,4m+4; [0, 2, 4, 6, 8, 10, 12])}{645120}$ is not an integer as in Table~\ref{tab:F(m)/645120}. 
Hence $n_{14}$ is not an integer, which is a contradiction. 
Therefore, there is no self-orthogonal $7$-$(24m,4m+4,\lambda_{7})$ design.

\begin{table}[h] 
\caption{$F(m,4m+4; [0, 2, 4, 6, 8, 10, 12])/645120$}
\begin{center}
\begin{tabular}{|l|l|}\hline  \label{tab:F(m)/645120}
$m$ & $F(m,4m+4; [0, 2, 4, 6, 8, 10, 12])$ \\ \hline \hline
8 & 1569595833/8  \\ \hline
40 & 69722676263111828528771787666297086782790166251961/4  \\ \hline
42 & 7413717557642396579773804378982932177616748595565925/2  \\ \hline
50 & 51322358900999864497776773019002155555828915534612183899278199/8  \\ \hline
74 & 48264250867613004754712114323888323106391063440390013274122385587 \\ 
& 249531646783997099951125/4 \\ \hline
76 & 17297525223319023619606726106935051432172166325470799567243756674 \\
& 224986310548399405901291385/8 \\ \hline
80 & 27359269561632513491639341643828473595091499699382750775301196495 \\
& 5421000133137628599208064366065/4 \\ \hline
86 & 744177567256730099369876802080305538893055728577932239814964847527 \\
& 166234593936158833143382070818871425/2 \\ \hline

100 & 136128986840732000501396957664485117364432305019915914548757138545 \\
& 97559161850934989081846304372757198545374868637953809/8 \\ \hline

130 & 318970748043555317972217445610203126388833094365347676907996644311 \\ 
    & 283294844062616913946522213194960593445240713768468992990519455574 \\
& 30274425744716671055/8 \\ \hline

144 & 103507027242828427789085756140225158658851288631068123683963832873 \\ 
    & 230423948691159118858639575450894139827154262422968053791406366132 \\
& 850642263742190215236965412560306275/8 \\ \hline

150 & 225179137631450932254612557265887240264932451320353248639071269342 \\
    &142043892423125433482640303802028438247793256844429544681810502004 \\
& 849575476006319668579289958493743350895465/4 \\ \hline            
\end{tabular}
\end{center}
\end{table}

\end{proof}

\section{On the nonexistence of some support $7$-designs}\label{:section 4}

In this section, let $D$ be the support design of the minimum weight $4m+4$ of an extremal binary doubly even $[24m,12m,4m+4]$ code $C$.
If $D$ is a $7$-design, the parameters are $(24m,4m+4,\lambda_{7})$, 
where $\lambda_{7} =  \binom{5m-2}{m-1} \frac{(4m-1)(4m-2)}{(24m-5)(24m-6)}$, and $m \in M$.
By Lemma \ref{lem:6,7} and Theorem \ref{thm:so 7-design}, we consider the remainder

\begin{align*}
m \in  \{5,15,19,35,41,51,52,55,57,59,60,63,65,75,90,93, & \\
 101,104,105,107,118,125,127,129,135,143,151\}.&
\end{align*}

Let $C_{u}$ be the set of all codewords of weight $u$ of $C$, where $4m+4 \leq u \leq 20m-4$. 
Fix $a \in C_{u}$ and define $n_{j}^{u}:= \big| \{ c \in C_{4m+4}: |supp (a) \cap supp (c)|=j \} \big|$. 
Then by using the fundamental equations in \cite[Proof of Theorem 5]{Bannai-Koike-Shinohara-Tagami},   
we have 
$$\sum_{j=0}^{4m+4} \binom{j}{s} n_{j}^u= \binom{u}{s}\lambda_{s}$$
for $s=0,1, \ldots, t$.  (Note that $n_j^u=0$ if $j$ is odd, and $n_{j}^u=0$ if $j > \frac{u}{2}$.) 
These equations have been studied for a $t$-design and some applications,  
e.g. see Cameron and van Lint \cite{cameron-lint}, Koch \cite{koch} and Tonchev \cite{tonchev-text1988}. 
We set                                      
$A_{s}^{u}= \sum_{j=0}^{4m+4} (j)_{s} n_{j}^u= (u)_{s} \lambda_{s}$ for $0 \leq s \leq 7$.

For the design $D$, we define 
 $$F(m,u;[ x_{1}, x_{2}, x_{3}, x_{4}, x_{5},x_{6},x_{7}]) = \sum_{j=0}^{4m+4} (j-x_{1}) (j-x_{2}) \cdots (j-x_{7})n_{i}^u.$$
Then as a generalization of equation (\ref{eq:F(m,4m+4)}), we have 

\begin{align}
 &F(m,u;[ x_{1}, x_{2}, x_{3}, x_{4}, x_{5},x_{6},x_{7}]) 
 =\sum_{\theta =0}^{7}   (-1)^{\theta}  \sigma_{\theta,7} \left( \sum_{h =0}^{7-\theta} S(7- \theta, h)  A_{h}^{u}  \right) \notag \\
 &= -\sigma{}_{7,7} A_{0}^{u} 
                 +(\sigma{}_{6,7}-\sigma{}_{5,7}+\sigma{}_{4,7}-\sigma{}_{3,7}+\sigma{}_{2,7}-\sigma{}_{1,7}+1) A_{1}^{u} \notag \\
                & +(-\sigma{}_{5,7}+3\sigma{}_{4,7}-7\sigma{}_{3,7}+15\sigma{}_{2,7}-31\sigma{}_{1,7}+63)  A_{2}^{u}
                 +(\sigma{}_{4,7}-6\sigma{}_{3,7}+25\sigma{}_{2,7}-90\sigma{}_{1,7}+301)  A_{3}^{u} \notag \\
                & +(-\sigma{}_{3,7}+10\sigma{}_{2,7}-65\sigma{}_{1,7}+350)  A_{4}^{u}
                 +(\sigma{}_{2,7}-15\sigma{}_{1,7}+140)  A_{5}^{u}
                 +(-\sigma{}_{1,7}+21)  A_{6}^{u}
                 + A_{7}^{u}.\label{eq:F(m,u)}
\end{align}

Then we give the following theorem. (Note that Theorem \ref{thm:so 7-design} is a stronger result.)

\begin{Thm}\label{thm:support 7-design}
If $m \in \{5,19,35,41,51,65,75,101,129 \}$, 
then $D$ (the support design of the minimum weight of $C$) 
is not a $7$-design.


\end{Thm}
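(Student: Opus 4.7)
The plan is to generalize the argument of Theorem~\ref{thm:so 7-design} by exploiting codewords of weight $u > 4m+4$ in the extremal code $C$. Since the weight distribution of an extremal doubly even self-dual $[24m,12m,4m+4]$ code is uniquely determined, for each $m$ one can read off which weights $u \equiv 0 \pmod 4$ in the range $4m+4 \leq u \leq 20m-4$ satisfy $|C_u|>0$. For any such $u$ and any fixed $a \in C_u$, the doubly even property gives $\wt(a)+\wt(c)-2|\supp(a)\cap\supp(c)| \equiv 0 \pmod 4$ for every $c \in C_{4m+4}$, so $n_j^u=0$ whenever $j$ is odd.

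Apply (\ref{eq:F(m,u)}) with $[x_1,\dots,x_7]=[0,2,4,6,8,10,12]$. The polynomial $j(j-2)(j-4)(j-6)(j-8)(j-10)(j-12)$ vanishes at every even $j \leq 12$, while $n_j^u$ vanishes at every odd $j$, so only indices $j=2k$ with $k \geq 7$ contribute:
\begin{equation*}
F(m,u;[0,2,4,6,8,10,12]) = \sum_{k \geq 7} 2^7\, k(k-1)\cdots(k-6)\, n_{2k}^u = 645120 \sum_{k \geq 7}\binom{k}{7}\, n_{2k}^u.
\end{equation*}
Hence $F(m,u;[0,2,4,6,8,10,12])/645120$ must be a nonnegative integer for every admissible $u$. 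On the other hand, substituting $A_s^u=(u)_s \lambda_s$ into (\ref{eq:F(m,u)}) produces, for each $m$, an explicit closed formula in $u$ which can be evaluated directly.

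My strategy is then as follows: for each $m \in \{5,19,35,41,51,65,75,101,129\}$, search for some admissible $u$ with $|C_u|>0$ such that the closed formula above evaluates to a non-integer. The choice $u=4m+4$ alone will not suffice, since the listed $m$'s are precisely those left over after Theorem~\ref{thm:so 7-design} (whose hypothesis is strictly weaker); hence one must try $u=4m+8, 4m+12, \ldots$ in turn. I anticipate that the main obstacle will be the case-by-case verification, performed in Magma, that a suitable $u$ exists for each of the nine values of $m$; the resulting data would then be tabulated in analogy with Table~\ref{tab:F(m)/645120}. Once a non-integer value of $F(m,u;[0,2,4,6,8,10,12])/645120$ is produced for each $m$, this contradicts the integrality statement above, and $D$ cannot be a $7$-design.
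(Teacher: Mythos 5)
Your proposal is correct and follows essentially the same route as the paper: the authors apply exactly this integrality test with the single choice $u=4m+8$, computing $F(m,4m+8;[0,2,4,6,8,10,12])/645120$ in Magma and finding it non-integral for each of the nine values of $m$. The only difference is that you leave the weight $u$ to be searched for, whereas $u=4m+8$ already suffices in every case.
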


\begin{proof}
By (\ref{eq:F(m,u)}), we have
\begin{align*}
& F(m,4m+8; [0, 2, 4, 6, 8, 10, 12]) \\
& =\sum_{j=0}^{4m+4} j(j-2)(j-4)(j-6)(j-8)(j-10)(j-12) n_{j}^{4m+8} \\
& =10395A_{1}^{4m+8}-10395A_{2}^{4m+8}+4725A_{3}^{4m+8}-1260A_{4}^{4m+8}+210A_{5}^{4m+8}-21A_{6}^{4m+8}+A_{7}^{4m+8}.
\end{align*}

Then we have 
$$n_{14}^{4m+8}	=\frac{F(m,4m+8; [0, 2, 4, 6, 8, 10, 12])}{645120}-8n_{16}^{4m+8}-36n_{18}^{4m+8}- \cdots -\binom{2m+2}{7}n_{4m+4}^{4m+8}.$$

If $m \in \{5,19,35,41,51,65,75,101,129 \}$,
by computation using Magma,
we have that \\ $\frac{F(m,4m+8; [0, 2, 4, 6, 8, 10, 12])}{645120}$ is not an integer as in Table~\ref{tab:F(m,4m+8)/645120}. 
Hence $n_{14}^{4m+8}$ is not an integer, which is a contradiction. 
Therefore, $D$ is not a $7$-design.

\begin{table}[h] 
\caption{$F(m,4m+8; [0, 2, 4, 6, 8, 10, 12])/645120$}
\begin{center}
\begin{tabular}{|l|l|}\hline  \label{tab:F(m,4m+8)/645120}
$m$ & $F(m,4m+8; [0, 2, 4, 6, 8, 10, 12])/645120$ \\ \hline \hline
5 & 9009/4  \\ \hline
19 & 10290542185356908976248643/8  \\ \hline
35 & 240192525434759794880275676371011296919815805/8  \\ \hline
41 & 1229066981776753671012029436288037892461385328646335/4  \\ \hline
51 & 836449644579567992045815972312879647652910128602615298771389885/8 \\  \hline
65 & 72975174207654767982109272917411685745718438510666139598156100263 \\
& 01949797750545/8 \\ \hline
75 & 71317588499310631419430590525991955846021452139909800919361087401 \\
& 3324199822838428254310609/4 \\ \hline
101 & 95541360721321819333355415268808168206345704645344007828295378667 \\
& 333445530368924972096458449177337659658397691862895305/4 \\ \hline

129 & 26230778791143794560883418189575439901696761060680966584544120210 \\
& 69910259333223193114235770446444499518305259605168488333726043587 \\ 
& 913132060022892602625/8 \\ \hline
       
\end{tabular}
\end{center}
\end{table}

\end{proof}

Finally, we consider when $D$ is an $8$-design. 
From Lemma \ref{lem:6,7}, 
if $\lambda_{8}$ is also an integer, we have $m \in \{8,42,63,75,130 \}.$ 
By Theorem \ref{thm:so 7-design} and \ref{thm:support 7-design}, 
we have only the remainder $m=63$.
Let $D''$ be a self-orthogonal $8$-$(24m,4m+4,\lambda_{8})$ design, 
where $\lambda_{8} =  \binom{5m-2}{m-1} \frac{(4m-1)(4m-2)(4m-3)}{(24m-5)(24m-6)(24m-7)}$.
With $n_i$ as defined in Section \ref{:section 3}, 
which equals $n_i^{4m+4}$, we set 
$A_{s}= \sum_{i=0}^{4m+4} (i)_{s} n_{i}= (4m+4)_{s} \lambda_{s}$ for $0 \leq s \leq 8$. 
For the design $D''$, 
we have

\begin{align*}
 F(m,4m+4;[ x_{1}, x_{2}, x_{3}, x_{4}, x_{5},x_{6},x_{8}]) &= \sum_{i=0}^{4m+4} (i-x_{1}) (i-x_{2}) \cdots (i-x_{8})n_{i} \\
 &=\sum_{\theta =0}^{8}   (-1)^{\theta}  \sigma_{\theta,8} \left( \sum_{h =0}^{8-\theta} S(8- \theta, h)  A_{h}  \right). \notag \\
\end{align*}

Then, we have 
$$n_{16}=\frac{F(m,4m+4; [0, 2, 4, 6, 8, 10, 12,14])}{10321920}-9n_{18}-45n_{20}- \cdots -\binom{2m+2}{8}n_{4m+4}.$$

In the case $m=63$, 
by a computation using Magma, 
we have 

\begin{align*}
& \frac{F(63,4 \cdot 63+4; [0, 2, 4, 6, 8, 10, 12,14])}{10321920} \\
&\hspace{-2mm} =-16809515472136742134534321134853418244406436165053567105402493489903309445518999/1792.
\end{align*}
Hence $n_{16}$ is not an integer. Therefore, if $m=63$, there is no self-orthogonal $8$-$(24m,4m+4,\lambda_{8})$ design.

Then, for the design $D$,  
we have the following theorem.

\begin{Thm}\label{thm:support 8-design}
$D$ is never an $8$-design.
\end{Thm}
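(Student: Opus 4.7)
The plan is to combine the results already proven with a single additional computation. Being an $8$-design forces $\lambda_{8}$ to be an integer; combined with Lemma~\ref{lem:6,7} this restricts $m$ to the set $\{8,42,63,75,130\}$. Every $8$-design is in particular a $7$-design, and the support design of the minimum weight of a binary doubly even self-dual code is automatically self-orthogonal. Hence Theorem~\ref{thm:so 7-design} rules out $m\in\{8,42,130\}$ and Theorem~\ref{thm:support 7-design} rules out $m=75$. The whole problem collapses onto the single case $m=63$.

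For $m=63$ I would repeat the method of Theorem~\ref{thm:so 7-design} one degree higher. Applying Lemma~\ref{lem:single block intersection} with $l=8$ to $D$ and choosing $x_{i}=2(i-1)$ for $1\le i\le 8$, the polynomial $(j-x_{1})\cdots(j-x_{8})$ vanishes at every even $j\le 14$, while self-orthogonality of $D$ kills every odd $j$. The left-hand side of Lemma~\ref{lem:single block intersection} then collapses to $2^{8}\cdot 8!\,n_{16}$ plus an integer combination of $n_{18},n_{20},\dots,n_{4m+4}$, and the right-hand side is a computable integer combination of $A_{0},\dots,A_{8}$ via $A_{s}=(4m+4)_{s}\lambda_{s}$. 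Dividing by $2^{8}\cdot 8!=10321920$ isolates $n_{16}$ modulo $\Z$ as an explicit rational number; if its denominator exceeds $1$, the supposed $8$-design cannot exist, and a fortiori $D$ itself cannot be an $8$-design.

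The only real obstacle is arithmetic size, not conceptual content: at $m=63$ the factor $\binom{5m-2}{m-1}=\binom{313}{62}$ in $\lambda_{s}$ already has several dozen decimal digits, so the eight-term Stirling sum on the right must be evaluated in exact rational arithmetic and its denominator certified to be nontrivial. This step is routine in Magma; the computation displayed immediately before the theorem yields a rational with denominator $1792=2^{8}\cdot 7$, contradicting $n_{16}\in\Z$ and closing the last case.
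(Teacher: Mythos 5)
Your proposal is correct and follows essentially the same route as the paper: reduce to $m\in\{8,42,63,75,130\}$ via integrality of $\lambda_8$, eliminate all but $m=63$ using Theorems \ref{thm:so 7-design} and \ref{thm:support 7-design}, and then kill $m=63$ by the degree-$8$ intersection-number computation with $[0,2,4,6,8,10,12,14]$, which forces $n_{16}$ to have denominator $1792$. This is exactly the argument given in Section \ref{:section 4}.
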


Thus the proof of Theorem \ref{thm:main thm} is completed.

By the Assmus-Mattson theorem, the support design of minimum weight of 
an extremal binary doubly even $[24m +8, 12m + 4, 4m + 4]$, 
respectively $[24m + 16, 12m + 8, 4m + 4]$, 
code is a $3$-design, $1$-design, respectively. 
We give the following results by a similar argument to the above. 

\begin{Thm}\label{cor:sub results} 
Let $D_{1}$ and $D_{2}$ be the support $t$-designs of the minimum weight of an extremal binary doubly even self-dual 
$[24m+8,12m+4,4m+4]$ code ($m \leq 158$) and $[24m+16,12m+8,4m+4]$ code ($m \leq 163$), respectively.

\begin{enumerate}
\item[$(1)$] 
If $D_{1}$ becomes a $4$-design, then $D_{1}$ is a $5$-design and $m$ must be in the set \\
$\{15,35,45,58,75,85,90,95,113,115,120,125 \}$. 
If $D_{1}$ becomes a $6$-design, then $m$ must be in the set $\{58,90,113 \}$. 
If $D_{1}$ becomes a $7$-design, then $m$ must be in the set $\{58\}$, 
and $D_{1}$ is never an $8$-design.

\item[$(2)$] 
If $D_{2}$ becomes a $2$-design, then $D_{2}$ is a $3$-design and $m$ must be in the set \\
$\{5$, $10$, $20$, $23$, $25$, $35$, $44$, $45$, $50$, $55$, 
$60$, $70$, $72$, $75$, $79$, 
$80$, $85$, $93$, $95$, $110$, $118$, $120$, 
$121$, $123$, $125$, $130$, $142$, $144$, 
$145$, $149$, $150$, $155$, $156$, $157$, 
$160$, $163$$\}$. 
If $D_{2}$ becomes a $4$-design, then $m$ must be in the set 
$\{10$, $79$, 
$93$, $118$, $120$, 
$123$, $125$, $142$$\}$. 
If $D_{2}$ becomes a $5$-design, then $m$ must be in the set 
$\{79$, 
$93$, $118$, $120$, 
$123$, $125$, $142$$\}$, 
and $D_{2}$ is never a $6$-design.

\end{enumerate}
\end{Thm}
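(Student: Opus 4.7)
The plan is to mimic, essentially verbatim, the methodology developed in Sections~\ref{:section 2}--\ref{:section 4} for the length~$24m$ case, but now with block size still $k = 4m+4$ and ambient sizes $v = 24m+8$ and $v = 24m+16$. For each part, I first compute the putative coefficients $\lambda_{t}$ from $\lambda_{t} = \binom{v}{k}^{-1}\cdot(\text{block count})\cdot\binom{v-t}{k-t}\cdots$ starting from the base Assmus--Mattson $t$ (namely $t=3$ for $D_1$ and $t=1$ for $D_2$). Listing the $m$ in the stated ranges for which every $\lambda_{t'}$ with $t'$ up to the target level is a nonnegative integer produces the initial candidate sets, e.g.\ $\{15,35,45,58,75,85,90,95,113,115,120,125\}$ at the $4$-design level of $D_1$ and the $36$-element list at the $2$-design level of $D_2$. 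The jumps from a $t$-design to a $(t+1)$-design (from $4$ to $5$ in part~(1), and $2$ to $3$ and $4$ to $5$ in part~(2)) are handled exactly as in Section~\ref{:section 2}, by invoking the strengthening of the Assmus--Mattson theorem \cite{strengthening of the Assmus-Mattson theorem}.

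Next, to prune the remaining candidates, I would apply the $F$-function machinery of Lemma~\ref{lem:single block intersection}. Fixing a codeword $a$ of weight $u$ in $C$ with $u \in \{4m+4, 4m+8\}$ and writing $n_j^{u}$ for the number of minimum-weight codewords whose support meets that of $a$ in $j$ points, one has $A_s^{u} = (u)_s \lambda_s$ for $0 \leq s \leq t$, and since $C$ is doubly even the entries $n_j^{u}$ vanish for odd $j$. Taking the seven (or eight) evenly spaced roots $[0,2,4,6,8,10,12(,14)]$ in $F(m,u;[x_1,\ldots])$ isolates $n_{14}^{u}$ (respectively $n_{16}^{u}$) as
\[
n_{14}^{u}
= \frac{F(m,u;[0,2,4,6,8,10,12])}{645120}
- 8\,n_{16}^{u} - 36\,n_{18}^{u} - \cdots,
\]
so non-integrality of the leading fraction forces a contradiction. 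For the abstract self-orthogonal variant (needed when ruling out the highest design level, the $8$-design of $D_1$ and the $6$-design of $D_2$) I would use the same equation with the $n_i$ of an abstract self-orthogonal design, as in Theorem~\ref{thm:so 7-design}. Running this computation with $u = 4m+4$ and $u = 4m+8$ through Magma will, for the tabulated sublists of $m$, produce a non-integer ratio, eliminating those $m$.

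Concretely I would stratify the proof as: (i) at the $4$-design and $5$-design levels of $D_1$ and the $2$-,$3$-,$4$-,$5$-design levels of $D_2$, apply Lemma~\ref{lem:6,7}-style integrality to generate the candidate lists; (ii) at the $6$- and $7$-design levels of $D_1$ and the $6$-design level of $D_2$, apply the $F$-equation with $u = 4m+8$ to trim the lists to $\{58,90,113\}$, $\{58\}$, and the empty set respectively; (iii) rule out an $8$-design for $D_1$ at $m=58$ by the analog of the final computation of Section~\ref{:section 4}, using the $8$-root variant of $F$ at $u = 4m+4$ in the self-orthogonal design.

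The essential obstacle is not conceptual but bookkeeping: one must correctly assemble the closed-form expressions for $\lambda_t$ and $A_s^{u}$ in the new length regimes, and then verify non-integrality of very large rational numbers for each surviving $m$. The mild subtlety is choice of the weight $u$ at which to evaluate $F$: if $u=4m+8$ yields an integer quotient one must fall back either to the self-orthogonal $u=4m+4$ computation (as in Theorem~\ref{thm:so 7-design}) or to a higher $n_j$ obtained by adding the root $14$, as done for the $8$-design step. Once the weight is chosen correctly in each stratum, the Magma computation settles the matter and the stated lists follow.
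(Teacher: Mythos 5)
Your overall strategy is the right one, and indeed it is all the paper itself offers: the authors prove this theorem only by the remark ``by a similar argument to the above,'' so the intended proof is exactly the combination you describe of (i) integrality of the $\lambda_i$'s to generate candidate lists, (ii) the Calderbank--Delsarte--Sloane jump at the appropriate level, and (iii) the intersection-number identities of Lemma~\ref{lem:single block intersection} evaluated at even roots to force a non-integral $n_{14}^u$ or $n_{16}^u$ via Magma. In that sense your reconstruction matches the paper's approach.

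There are, however, two concrete missteps in your execution plan. First, you invoke the strengthening of the Assmus--Mattson theorem for a $4\to 5$ jump in part (2). For a code of length $24m+16$ the only jump available from that theorem is $2\to 3$ (the pattern is base strength $t_0\in\{5,3,1\}$ with a single jump $(t_0+1)\to(t_0+2)$), and the theorem's own lists confirm this: $m=10$ survives at the $4$-design level of $D_2$ but not at the $5$-design level, which would be impossible if every $4$-design $D_2$ were automatically a $5$-design. Second, there is a degree mismatch in your step (ii): Lemma~\ref{lem:single block intersection} requires $l\le t$, so the degree-$7$ identity $F(m,u;[0,2,\dots,12])$ presupposes that $A_7^u=(u)_7\lambda_7$ is available, i.e.\ that the design is already a $7$-design. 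You cannot use it to trim the $6$-design candidate list for $D_1$ to $\{58,90,113\}$ or to rule out a $6$-design for $D_2$; at the $6$-design level you must either rely on integrality of $\lambda_6$ alone or use the degree-$6$ analogue isolating $n_{12}^u$ (with divisor $12\cdot 10\cdots 2=46080$), since no $6\to 7$ jump is available for $D_1$ or $D_2$. The same caveat applies to the degree-$8$ identity, which may only be deployed after assuming an $8$-design. With those two corrections, and granting the unverified Magma computations (which is also where the paper leaves matters), the plan is sound.
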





\end{document}